\newtheorem{thm}{Theorem}[section] % section
\newtheorem{lem}[thm]{Lemma}
\begin{document}
\pagenumbering{arabic}

%\title{Guaranteed two-sided spectral bounds and effective condition numbers of preconditioned homogenization elasticity and diffusion problems}
%
%\maketitle

%\centerline{Short amendment / Letter / Brief communication}
%
%\vskip16pt

{\bf\centerline{\large  Generalized spectrum of second order differential operators: 3D problems}}
\vskip16pt

\centerline{Ivana Pultarov\' a\footnote{Address: Faculty of Civil Engineering,
Czech Technical University in Prague, Th\' akurova~7, Praha~6;
e-mail: ivana.pultarova@cvut.cz}}

\vskip36pt

%%\centerline{\thedate}

%\vskip16pt

{\bf Abstract.}
Generalized spectra of differential operators can be related 
to spectra of preconditioned discretized operators.
Obtaining (estimates of) the eigenvalues of the preconditioned
discretized operators may lead to better 
estimating of the quality of preconditioners.
In this short paper, we 
answer the open question posted in the recent paper 
Generalized spectrum of second order differential operators,
authored by
Gergelits, Nielsen, and Strako\v s.
The proof we present allows us to fully extend  
characterizing the 
generalized spectra of $\nabla\cdot K\nabla u=\lambda\triangle u$
to problems
of dimension three or higher.

{\bf AMS subject classification.} 65F08, 65F15.

\section{Itroduction}

Efficiency of numerical solution methods for differential equations
with symmetric operators
usually depends on spectral properties of the underlying matrices.
Different preconditioning techniques 
transform the solved systems to systems with better
spectral characteristics. Recent progress in estimating the spectrum
of preconditioned systems was achieved in~\cite{G}
and then in~\cite{LPZ2020,PL2021}, where new methods for obtaining two sided-bounds 
on all eigenvalues of the preconditioned matrices 
were introduced. These results, however, were motivated by
infinite-dimensional problems.
In~\cite{NTH2009} it was proved that eigenvalues of  
preconditioned operators with scalar coefficients 
defined in infinite-dimensional spaces
correspond to the coefficient throughout the domain.
In~\cite{GNS2020}, problems with tensorial data were considered and 
spectra of preconditioned operators 
were fully characterized. The proof, however,
was presented only for problems of two space variables.
In this paper, we prove the main statement of~\cite{GNS2020} 
for three-dimensional (3D) cases, which can be easily generalized
for problems of higher dimensions. 

In the subsequent section, we first recall the notation and the main result of~\cite{GNS2020}. Then in the second subsection, we 
introduce the proof of characterizing the spectra of preconditioned
operators in 3D. The main result is formulated in Theorem~\ref{t1}, 
the proof of which is based on Lemma~\ref{lem22}, Lemma~\ref{lem23},
Lemma~\ref{lem24}, and Lemma~\ref{lem25}.
A brief discussion concludes the paper.

\section{Characterization of the spectrum}

\subsection{Notation}

We study the generalized eigenvalue problem 
\begin{eqnarray}\label{p1}
\nabla\cdot K(\bm{x})\nabla u(\bm{x})&=&\lambda\triangle u(\bm{x})\quad\text{for}\; \bm{x}\in 
\Omega,\\
u(\bm{x})&=&0\quad\text{for}\; \bm{x}\in \partial\Omega,\nonumber
\end{eqnarray}
where $\Omega\subset{\mathbb R}^d$, $d=1,2,3$, is a bounded domain with
Lipschits boundary $\partial\Omega$ and $K:\Omega\to {\mathbb R}^{d\times d}$ is the coefficient matrix which is assumed to be diagonal,
\begin{equation}\nonumber
K(\bm{x})=\left[\begin{array}{ccc}
\kappa_1(\bm{x})&&\\
&\ddots&\\
&&\kappa_d(\bm{x})\end{array}\right].
\end{equation}
Let us define the operators $\mathcal{L}$, $\mathcal{A}$:
$H_0^1(\Omega)\to H^{-1}(\Omega)$,
\begin{eqnarray}
\langle \mathcal{L}u,v\rangle&=&\int_\Omega \nabla v\cdot\nabla u,
\quad u,v\in H_0^1(\Omega),\nonumber\\
\langle \mathcal{A}u,v\rangle&=&\int_\Omega \nabla v\cdot K\nabla u,
\quad u,v\in H_0^1(\Omega),\nonumber
\end{eqnarray}
and the induced norm $\Vert u\Vert^2_{\mathcal{L}}=\langle \mathcal{L}u,u\rangle$.
The spectrum of the preconditioned operator
$\mathcal{L}^{-1}\mathcal{A}$: $H_0^1(\Omega)\to H_0^1(\Omega)$ 
is defined as
\begin{equation}\nonumber
\text{sp}(\mathcal{L}^{-1}\mathcal{A})=\{\lambda\in {\mathbb C};\;
\lambda \mathcal{I}-\mathcal{L}^{-1}\mathcal{A}\; \text{does not have
a bounded inverse}\}.
\end{equation}
Thus the weak form of problem~\eqref{p1} can be viewed as 
finding the spectrum of 
the preconditioned operator $\mathcal{L}^{-1}\mathcal{A}$.

We assume that $K$ is Lebesgue integrable and essentially bounded
in ${\Omega}$. Instead of the diagonal shape of $K$,
we can also deal with general symmetric matrices $K$. In such cases,
the main results, spectral estimates, remain valid.
To see this, an orthogonal decomposition of $K$ can be applied and 
the approach of~\cite[Section~4]{GNS2020} can be followed.

The main result of~\cite{GNS2020} formulated for 3D case ($d=3$)
reads (see~\cite[Theorem~1.1]{GNS2020})
\begin{thm}\label{t1}
Let $K$ be continuous in $\overline{\Omega}$.
Then the spectrum of the operator $\mathcal{L}^{-1}\mathcal{A}$
equals
\begin{equation}\nonumber
{\rm sp}(\mathcal{L}^{-1}\mathcal{A})={\rm Conv}(\cup_{i=1}^3\kappa_i(\overline{\Omega})).
\end{equation}
\end{thm}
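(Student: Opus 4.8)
The plan is to establish the two inclusions separately, after recording that $T:=\mathcal L^{-1}\mathcal A$ is bounded and self-adjoint with respect to the inner product $(u,v)_{\mathcal L}:=\langle\mathcal Lu,v\rangle=\int_\Omega\nabla u\cdot\nabla v$. Indeed $(Tu,v)_{\mathcal L}=\langle\mathcal Au,v\rangle=\int_\Omega\nabla v\cdot K\nabla u$ is symmetric in $u,v$ because $K$ is diagonal, hence symmetric, and boundedness follows from $\|K\|_{L^\infty}<\infty$ together with the Poincar\'e inequality. Consequently $\mathrm{sp}(T)\subset\mathbb R$ and $\mathrm{sp}(T)$ is contained in the closure of the numerical range $W(T)=\{(Tu,u)_{\mathcal L}/(u,u)_{\mathcal L}:u\neq0\}$.

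For the inclusion $\mathrm{sp}(T)\subseteq\mathrm{Conv}(\cup_{i=1}^3\kappa_i(\overline\Omega))$ I would observe that every Rayleigh quotient is a weighted average of the coefficient values: for $u\in H_0^1(\Omega)$,
\[
\frac{(Tu,u)_{\mathcal L}}{(u,u)_{\mathcal L}}=\frac{\int_\Omega\sum_{i=1}^3\kappa_i(\mathbf x)\,|\partial_iu(\mathbf x)|^2\,d\mathbf x}{\int_\Omega\sum_{i=1}^3|\partial_iu(\mathbf x)|^2\,d\mathbf x},
\]
which is a convex combination of the numbers $\kappa_i(\mathbf x)$, $\mathbf x\in\overline\Omega$, $i=1,2,3$, with nonnegative weights $|\partial_iu|^2$. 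Hence $W(T)\subseteq\mathrm{Conv}(\cup_i\kappa_i(\overline\Omega))=:[a,b]$, which is a compact interval since each $\kappa_i(\overline\Omega)$ is a compact interval by continuity of $\kappa_i$ and connectedness of $\overline\Omega$; therefore $\mathrm{sp}(T)\subseteq[a,b]$.

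The substance of the theorem is the reverse inclusion $[a,b]\subseteq\mathrm{sp}(T)$, i.e. that the spectrum has no gaps, which I would prove by exhibiting for each $\lambda\in[a,b]$ a Weyl sequence. The construction begins with an elementary but crucial geometric step, and this is where the dimension enters: for every $\lambda\in[a,b]$ there is an interior point $\mathbf x_0\in\Omega$ with $\min_i\kappa_i(\mathbf x_0)\le\lambda\le\max_i\kappa_i(\mathbf x_0)$, which follows from the intermediate value theorem applied to the continuous functions $\min_i\kappa_i$ and $\max_i\kappa_i$ on the connected set $\overline\Omega$. Setting $d_i:=\kappa_i(\mathbf x_0)-\lambda$ and $D:=\mathrm{diag}(d_1,d_2,d_3)$, the sign condition $\min_id_i\le0\le\max_id_i$ guarantees a real $\mathbf k\neq0$ solving $\mathbf k\cdot D\mathbf k=\sum_{i=1}^3 d_ik_i^2=0$. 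I then take the localized, highly oscillatory trial functions $u_{\delta,\omega}(\mathbf x)=\eta((\mathbf x-\mathbf x_0)/\delta)\,e^{i\omega\mathbf k\cdot\mathbf x}$, where $\eta$ is a fixed smooth bump supported in the unit ball, $\delta$ small and $\omega$ large.

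The heart of the matter, and the step I expect to be the main obstacle, is the error estimate. Using that the gradient of $w:=(T-\lambda)u$ is precisely the $L^2$-orthogonal projection $P_{\mathrm{grad}}$ of $(K-\lambda I)\nabla u$ onto gradient fields, one has $\|(T-\lambda)u\|_{\mathcal L}=\|P_{\mathrm{grad}}[(K-\lambda I)\nabla u]\|_{L^2}$. Splitting $K-\lambda I=D+(K-K(\mathbf x_0))$, the coefficient-variation part contributes at most $\varepsilon(\delta)\,\|\nabla u\|_{L^2}$ with $\varepsilon(\delta)\to0$ by uniform continuity of $K$. For the constant part, the leading oscillatory field $i\omega(D\mathbf k)\,\eta_\delta e^{i\omega\mathbf k\cdot\mathbf x}$ is, by the identity $\mathbf k\cdot D\mathbf k=0$, divergence-free up to envelope derivatives; an integration by parts that trades the factor $e^{i\omega\mathbf k\cdot\mathbf x}$ for $\omega^{-1}\mathbf k\cdot\nabla e^{i\omega\mathbf k\cdot\mathbf x}$ then shows its gradient projection is $O(\omega^{-1})$, for fixed $\delta$, relative to $\|\nabla u\|_{L^2}$. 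Thus, fixing $\delta$ and letting $\omega\to\infty$ drives the constant-coefficient error to zero, after which $\delta\to0$ removes $\varepsilon(\delta)$; a diagonal sequence $u_{\delta_n,\omega_n}$ with $\omega_n\delta_n^2\to\infty$ is the desired Weyl sequence, so $\lambda\in\mathrm{sp}(T)$. Since the spectrum is closed it suffices to realise the open interval $(a,b)$ in this way and pass to the closure. The delicate points will be making the oscillatory cancellation rigorous on a bounded domain, namely controlling the Leray projection of a nearly divergence-free, compactly supported oscillation, and, in higher dimensions, the book-keeping of admissible wave vectors $\mathbf k$ on the null cone $\{\mathbf k\cdot D\mathbf k=0\}$, which is exactly the generalization beyond the planar case of~\cite{GNS2020}.
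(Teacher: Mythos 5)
Your proposal is correct in outline but takes a genuinely different route from the paper. The inclusion $\mathrm{sp}(\mathcal L^{-1}\mathcal A)\subseteq\mathrm{Conv}(\cup_{i=1}^3\kappa_i(\overline\Omega))$ you prove exactly as in Lemma~\ref{lem25}, via the numerical range. For the reverse inclusion the paper splits the work into cases: Lemma~\ref{lem22} captures each individual value $\kappa_i(\bm{x}^0)$ with a non-oscillatory bump $v_r$ concentrated near a thin neighborhood of a two-dimensional disc orthogonal to the $i$-th axis, and Lemmas~\ref{lem23}--\ref{lem24} fill the gaps between the three ranges using exact local eigenfunctions $\sin(\cdot)\sin(\cdot)$ supported on a box whose sides are scaled by $\sqrt{\lambda-k_1}$ and $\sqrt{k_2-\lambda}$. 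You unify both steps into a single Weyl-sequence construction: choose, by the intermediate value theorem on the connected set $\overline\Omega$, a point $\bm{x}_0$ with $\min_i\kappa_i(\bm{x}_0)\le\lambda\le\max_i\kappa_i(\bm{x}_0)$, pick a wave vector on the null cone $\sum_i(\kappa_i(\bm{x}_0)-\lambda)k_i^2=0$ (the paper's sine frequencies satisfy exactly this relation), and modulate a bump by $e^{i\omega\bm{k}\cdot\bm{x}}$. This buys uniformity in $\lambda$ and an immediate extension to any dimension, at the price of the projection estimate, which you rightly flag as the crux: the naive bound on $\|\mathrm{div}\,F\|_{L^2}$ for the oscillatory field $F$ is \emph{not} small relative to $\|\nabla u\|_{L^2}\sim\omega\delta^{3/2}$, so the second integration by parts you describe is genuinely needed; it costs a factor $\delta^{-2}$ (one power of $\delta^{-1}$ from $\nabla^2\eta_\delta$ plus the Poincar\'e inequality, one from normalization) and forces precisely the coupling $\omega_n\delta_n^2\to\infty$ that you state, so your scalings are consistent. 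The paper's constructions avoid oscillatory estimates altogether --- the gap-filling functions are exact eigenfunctions and Lemma~\ref{lem22} is a direct computation --- but require the separate case analysis and, in Lemma~\ref{lem24}, a perturbation argument imported from~\cite{GNS2020}. Two points to make explicit when you write this up: first, for $\lambda$ in the open interval the intermediate-value step really does yield an \emph{interior} point $\bm{x}_0$ (otherwise $\Omega$ would be disconnected by the open sets $\{\min_i\kappa_i>\lambda\}$ and $\{\max_i\kappa_i<\lambda\}$), with the endpoints recovered from closedness of the spectrum as you note; second, since the operator acts on a real Sobolev space you should either pass to the complexification or take real and imaginary parts of $u_{\delta,\omega}$ and keep the one whose $\mathcal L$-norm stays bounded below.
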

\begin{proof}
The proof of the first inclusion consists of Lemmas~\ref{lem22},
\ref{lem23}, and~\ref{lem24} which are presented in the subsequent section.
 The inverse inclusion follows from 
Lemma~\ref{lem25}.
\end{proof}

\subsection{Proof of the 3D case}

To prove Theorem~\ref{t1} we first introduce or recall four auxiliary lemmas.
\begin{lem}\label{lem22}
Let $i\in\{1,2,3\}$ and let $\kappa_i$ be continuous in $\bm{x}^0\in \Omega$.
Then $\kappa_i(\bm{x}^0)\in {\rm sp}(\mathcal{L}^{-1}\mathcal{A})$.
\end{lem}
\begin{proof}
Without any loss of generality, let $i=1$ and let $\kappa_1$ 
be continuous in $\bm{x}^0\in \Omega$. Denote $\kappa_1(\bm{x}^0)=\lambda$.
We shall construct parametrized functions $v_r\in H^1(\Omega)$
such that
\begin{equation}\label{co}
\lim_{r\to 0}\Vert v_r\Vert_{\mathcal L}\ne 0\quad \text{and}\quad
\lim_{r\to 0}\Vert (\lambda\mathcal{I}-\mathcal{L}^{-1}\mathcal{A})v_r
\Vert_{\mathcal L}= 0.
\end{equation}
For $r\in(0,1)$ define a two-dimensional disc $D_r$, its neighborhood
$R_r$, and a cylinder $C_r$
\begin{eqnarray}\nonumber
D_r&=&\{\bm{x}\in \mathbb{R}^3;\, x_1=x_1^0,\, d([x_2,x_3],[x_2^0,x_3^0])\le r\}\nonumber\\
R_r&=&\{\bm{x}\in \mathbb{R}^3;\, d(\bm{x},D_r)\le r^2\}\nonumber\\
C_r&=&\{\bm{x}\in \mathbb{R}^3;\,x_1\in(x_1^0-r^2,x_1^0+r^2),\, 
d([x_2,x_3],[x_2^0,x_3^0])\le r\},\nonumber
\end{eqnarray}
where $d(A,B)$ is Euclidean distance between objects $A$ and $B$.
Note that $D_r\subset C_r\subset R_r$. 
Let us choose some $r^0\in(0,1)$ such that $R_{r^0}\subset \Omega$.
For every $r\in (0,r^0)$ define a function $v_r$ by
\begin{eqnarray}
v_r(\bm{x})&=&0, \qquad \bm{x}\in\Omega\setminus R_r\nonumber\\
&=&1,\qquad \bm{x}\in D_r\nonumber\\
&=&1-\frac{d(\bm{x},D_r)}{r^2}\in \langle 0,1),
\quad \bm{x}\in R_r\setminus D_r.\nonumber
\end{eqnarray}
Note that $v_r$ is continuous in $\Omega$ and
\begin{equation}\nonumber
\nabla v_r(\bm{x}) =\left(\frac{\pm 1}{r^2},0,0\right),
\quad \bm{x}\in C_r\setminus D_r.
\end{equation}
The part $R_r\setminus C_r$ is a part of a torus.
Equipotential surfaces of $v_r$ defined in $R_r\setminus C_r$ 
are of the shape of toroidal surfaces again. Thus the gradient of $v_r$
in $\bm{x}\in R_r\setminus C_r$ directs to 
the nearest point $\bm{x}_D$ of $D_r$ to $\bm{x}$.
Due to the equidistant distribution of the equipotential surfaces
 the gradients have the same norms for any $\bm{x}$ 
of the same equipotential surface. Then
\begin{equation}\nonumber
 \left\vert \frac{\partial v_r}{\partial x_i}\right\vert\le \frac{1}{r^2},
\quad i=1,2,3,\quad\bm{x}\in R_r\setminus C_r.
\end{equation}
Volume of $R_r$ is less than $2\pi r^2(r+r^2)^2$, volume of 
$C_r$ is $2\pi r^4$, thus volume of $R_r\setminus C_r$ is less than
$2\pi r^5(2+r)$.
Then 
\begin{eqnarray}
\int_{C_r}\left( \frac{\partial v_r}{\partial x_1}\right)^2&=&2\pi  \nonumber\\
\int_{C_r}\left( \frac{\partial v_r}{\partial x_i}\right)^2&=&0,\quad i=2,3\nonumber\\
\int_{R_r\setminus C_r}\left( \frac{\partial v_r}{\partial x_i}\right)^2&\le& 2\pi r (2+r)
,\quad i=1,2,3,\nonumber
\end{eqnarray}
which yields
\begin{equation}\nonumber
\lim_{r\to 0}\Vert v_r\Vert_{\mathcal L}=\sqrt{2\pi}.
\end{equation}
The rest of the proof is analogous to the last part of the 
proof of~\cite[Lemma~2.1]{GNS2020}.
Denoting $u_r=(\lambda\mathcal{I}-\mathcal{L}^{-1}\mathcal{A})v_r$, we get
\begin{equation}\nonumber
\Vert u_r\Vert_{\mathcal L}^2=\langle {\mathcal L}u_r,u_r\rangle=
\langle (\lambda{\mathcal L}-{\mathcal A})v_r,u_r\rangle=
\int_\Omega \nabla u_r\cdot(\lambda I-K)\nabla v_r\le 
\left(\int_\Omega\Vert(\lambda I-K)\nabla v_r\Vert^2\right)^\frac{1}{2}
\Vert u_r\Vert_{\mathcal L}.
\end{equation}
Then
\begin{eqnarray}
\Vert u_r\Vert_{\mathcal L}^2 &\le&\int_\Omega (\lambda-\kappa_1)^2\left( \frac{\partial v_r}{\partial x_1}\right)^2+\sum_{i=2}^3
\int_\Omega (\lambda-\kappa_i)^2\left( \frac{\partial v_r}{\partial x_i}\right)^2\nonumber\\
&\le &(2\pi+2\pi r (2+r))\,\sup_{\bm{x}\in R_r}\vert\kappa_1(\bm{x}^0)-\kappa_1(\bm{x})\vert^2
+ 2\pi r (2+r)\sum_{i=2}^3 \sup_{\bm{x}\in R_r}\vert\kappa_1
(\bm{x}^0)-\kappa_i(\bm{x})\vert^2.
\nonumber
\end{eqnarray}
{From continuity of $K$ in $\bm{x}^0$} we get
\begin{equation}
\lim_{r\to 0} \Vert u_r\Vert_{\mathcal L}=0,\nonumber
\end{equation}
which concludes the proof of~\eqref{co}.
\end{proof}

\begin{lem}\label{lem23}
Let $\kappa_i$, $i=1,2,3$, be constant on an open subdomain 
$S\subset\Omega$. Assume 
\begin{equation}\nonumber
\sup_{\bm{x}\in\Omega}\kappa_j(\bm{x})<\inf_{\bm{x}\in\Omega}\kappa_m(\bm{x}),
\end{equation}
for some pair $j,m\in\{1,2,3\}$, $j\ne m$. Then 
\begin{equation}\nonumber
[\sup_{\bm{x}\in\Omega}\kappa_j(\bm{x}),\inf_{\bm{x}\in\Omega}\kappa_m(\bm{x})]\subset
{\rm sp } (\mathcal{L}^{-1}\mathcal{A}).
\end{equation}
\end{lem}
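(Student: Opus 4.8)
The plan is to verify the conditions \eqref{co} again, but now for values $\lambda$ strictly between the ranges of two coefficients. Note first that $T:=\mathcal{L}^{-1}\mathcal{A}$ is bounded and self-adjoint on the Hilbert space $(H_0^1(\Omega),\langle\cdot,\cdot\rangle_{\mathcal L})$, since symmetry of $K$ makes $\langle Tu,v\rangle_{\mathcal L}=\langle\mathcal{A}u,v\rangle=\int_\Omega\nabla v\cdot K\nabla u$ symmetric in $u,v$; hence a real $\lambda$ lies in ${\rm sp}(T)$ as soon as we produce functions obeying the two requirements of \eqref{co}. Fix $\lambda\in[\sup_\Omega\kappa_j,\inf_\Omega\kappa_m]$. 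On $S$ the coefficients are constants $\kappa_j\equiv c_j$ and $\kappa_m\equiv c_m$, and since $S\subset\Omega$ the separation hypothesis yields $c_j\le\sup_\Omega\kappa_j\le\lambda\le\inf_\Omega\kappa_m\le c_m$, so $\lambda\in[c_j,c_m]$. Unlike Lemma~\ref{lem22}, a test function whose gradient points along a single axis can only realise the endpoints $c_j$ or $c_m$; to reach an intermediate $\lambda$ I would use a highly oscillatory wave packet whose phase gradient is tilted in the $(x_j,x_m)$-plane, so that the \emph{effective} coefficient becomes a convex combination of $c_j$ and $c_m$.

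Concretely, I would choose a frequency vector $\bm{\xi}$ supported only on the coordinates $j,m$ with $(c_j\xi_j^2+c_m\xi_m^2)/(\xi_j^2+\xi_m^2)=\lambda$; such $\bm{\xi}$ exists because the left-hand side sweeps all of $[c_j,c_m]$ as the ratio $\xi_m^2/\xi_j^2$ runs over $[0,\infty]$. The purpose of this choice is the transversality relation $\bm{b}\cdot\bm{\xi}=0$ on $S$, where $\bm{b}:=(\lambda I-K)\bm{\xi}$ is constant on $S$ with components $b_j=(\lambda-c_j)\xi_j$, $b_m=(\lambda-c_m)\xi_m$, the rest zero. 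Fixing a bump $\phi\in C_c^\infty(S)$, $\phi\not\equiv0$, set $v_\varepsilon(\bm{x})=e^{\,i\bm{\xi}\cdot\bm{x}/\varepsilon}\phi(\bm{x})\in H_0^1(\Omega)$ and let $\varepsilon\to0$. A direct computation gives $\|v_\varepsilon\|_{\mathcal L}\sim\varepsilon^{-1}|\bm{\xi}|\,\|\phi\|_{L^2}$, so the first condition in \eqref{co} holds. Expanding $(\lambda\mathcal{L}-\mathcal{A})v_\varepsilon$ on $S$, the leading $\varepsilon^{-2}$ term is proportional to $(\lambda|\bm{\xi}|^2-\bm{\xi}\cdot K\bm{\xi})\,e^{\,i\bm{\xi}\cdot\bm{x}/\varepsilon}\phi$ and \emph{vanishes} by the choice of $\bm{\xi}$; what survives is $(\lambda\mathcal{L}-\mathcal{A})v_\varepsilon=e^{\,i\bm{\xi}\cdot\bm{x}/\varepsilon}\bigl(-\tfrac{2i}{\varepsilon}\,\bm{b}\cdot\nabla\phi+g\bigr)$, with $g$ bounded in $H^1$ independently of $\varepsilon$ (this is where constancy of all $\kappa_i$ on $S$ is used to keep the remainder clean).

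The crux, and the step I expect to be the main obstacle, is to estimate $\|(\lambda\mathcal{I}-T)v_\varepsilon\|_{\mathcal L}=\|(\lambda\mathcal{L}-\mathcal{A})v_\varepsilon\|_{H^{-1}(\Omega)}$. The Cauchy--Schwarz bound used at the end of Lemma~\ref{lem22}, namely $\|(\lambda\mathcal{I}-T)v_\varepsilon\|_{\mathcal L}\le(\int_\Omega\|(\lambda I-K)\nabla v_\varepsilon\|^2)^{1/2}$, is now too lossy: its right-hand side is of order $\varepsilon^{-1}$, the same order as $\|v_\varepsilon\|_{\mathcal L}$, and would yield no useful limit. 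Instead I would exploit the oscillation directly through the elementary duality estimate $\|e^{\,i\bm{\xi}\cdot\bm{x}/\varepsilon}h\|_{H^{-1}(\Omega)}\le C\varepsilon\|h\|_{H^1}$ for compactly supported $h$, proved by writing $e^{\,i\bm{\xi}\cdot\bm{x}/\varepsilon}=\tfrac{\varepsilon}{i|\bm{\xi}|^2}\,\bm{\xi}\cdot\nabla e^{\,i\bm{\xi}\cdot\bm{x}/\varepsilon}$ and integrating by parts against the test function. Applied to the surviving source term, this converts its $\varepsilon^{-1}$ prefactor into an $O(1)$ bound, so $\|(\lambda\mathcal{I}-T)v_\varepsilon\|_{\mathcal L}\le C$ uniformly in $\varepsilon$. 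Dividing by $\|v_\varepsilon\|_{\mathcal L}\sim\varepsilon^{-1}|\bm{\xi}|\,\|\phi\|_{L^2}$ then makes the normalised residual $O(\varepsilon)\to0$, which is exactly the second condition in \eqref{co}. Working with the complex exponential is only for convenience, since taking the real or imaginary part (whichever keeps the $\mathcal{L}$-norm bounded below) produces a genuine real Weyl sequence, and the degenerate cases $\lambda=c_j$ or $\lambda=c_m$ are harmless because then $\bm{b}=0$ and the source term loses its singular part. Letting $\lambda$ range over $[\sup_\Omega\kappa_j,\inf_\Omega\kappa_m]$ gives the claimed inclusion.
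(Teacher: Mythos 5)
Your proposal is correct, but it reaches the conclusion by a genuinely different route than the paper. The paper's proof does not use a Weyl sequence at all: working on a small box
$S_h=(x_1^0,x_1^0+h\sqrt{\lambda-k_1})\times(x_2^0,x_2^0+h\sqrt{k_2-\lambda})\times(x_3^0,x_3^0+h)\subset S$,
it exhibits the \emph{exact}, compactly supported eigenfunction
$\phi(\bm{x})=\sin\bigl(n\pi(x_1-x_1^0)/(h\sqrt{\lambda-k_1})\bigr)\sin\bigl(n\pi(x_2-x_2^0)/(h\sqrt{k_2-\lambda})\bigr)$,
for which $(\lambda-k_1)\partial_1^2\phi$ and $(\lambda-k_2)\partial_2^2\phi$ cancel exactly and the $x_3$-term vanishes, so $\nabla\cdot(\lambda I-K)\nabla v=0$ with zero residual and no functional-analytic machinery. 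The two constructions are secretly built on the same algebraic fact: the paper's product of sines is a superposition of plane waves with wave vectors $\bigl(\pm n\pi/(h\sqrt{\lambda-k_1}),\,\pm n\pi/(h\sqrt{k_2-\lambda}),\,0\bigr)$, each of which satisfies precisely your null condition $\bm{\xi}\cdot(\lambda I-K)\bm{\xi}=0$. What the paper's version buys is that $\lambda$ is shown to be an actual eigenvalue, by an argument requiring nothing beyond the chain rule; what your version buys is robustness --- you never need the oscillation to match the boundary of a box, which is a real advantage here, since the paper's $\phi$ as written does not vanish on the two faces $x_3=x_3^0$ and $x_3=x_3^0+h$ (it is independent of $x_3$), so its extension by zero needs an extra cutoff or a modified third factor to land in $H_0^1(\Omega)$, whereas your bump $\phi\in C_c^\infty(S)$ disposes of this issue entirely. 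Your approach also scales directly to the situation of Lemma~\ref{lem24}, where $K$ is only continuous at a point rather than constant on $S$, at the cost of the self-adjointness argument, the normalization step (your $\Vert v_\varepsilon\Vert_{\mathcal L}$ blows up rather than staying bounded, so condition~\eqref{co} must be read after dividing through), and the oscillatory $H^{-1}$ duality estimate --- which is the one step you must carry out carefully: it works because constancy of $K$ on $S$ and smoothness of $\phi$ let you write the residual in non-divergence form with smooth, $\varepsilon$-independent amplitudes before integrating by parts, exactly as you note.
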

\begin{proof}
Let $j=1$ and $m=2$ without any loss of generality.
Denote $k_1=\kappa_1(\bm{x})$ and $k_2=\kappa_2(\bm{x})$,
$\bm{x}\in S$, the values of $\kappa_1$ and $\kappa_2$ in $S$.
Note that 
\begin{equation}\nonumber
[\sup_{\bm{x}\in\Omega}\kappa_1(\bm{x}),\inf_{\bm{x}\in\Omega}\kappa_2(\bm{x})]\subset
[k_1,k_2].
\end{equation}  
Let $\lambda\in (k_1,k_2)$ and choose some point $\bm{x}^0\in S$. 
Denote 
\begin{equation}\nonumber
S_h=(x_1^0,x_1^0+h\sqrt{\lambda-k_1})\times
(x_2^0,x_2^0+h\sqrt{k_2-\lambda})\times 
(x_3^0,x_3^0+h)
\end{equation}
where $h\in(0,1)$ is sufficiently small that $S_h\subset S$.
Then for any $n\in{\mathbb Z}$ the tensor product function 
\begin{equation}\nonumber
\phi(\bm{x})=\sin\left(\frac{n\pi(x_1-x_1^0)}{h\sqrt{\lambda - k_1}}\right)
\sin\left(\frac{n\pi(x_2-x_2^0)}{h\sqrt{k_2-\lambda}}\right)
\end{equation}
fulfills 
\begin{equation}\nonumber
(\lambda-k_1)\frac{\partial^2 \phi(\bm{x})}{\partial x_1^2}=
-\frac{n^2\pi^2}{h^2}\phi(\bm{x}),\quad
(k_2-\lambda)\frac{\partial^2 \phi(\bm{x})}{\partial x_2^2}=
-\frac{n^2\pi^2}{h^2}\phi(\bm{x}),\quad
(\lambda-k_3)\frac{\partial^2 \phi(\bm{x})}{\partial x_3^2}=0
\end{equation}
in $S_h$ and the boundary condition $\phi(\bm{x})=0$ on $\partial S_h$.
Thus the function $v\in H_0^1(\Omega)$ defined as 
\begin{eqnarray}
v(\bm{x})&=&\phi(\bm{x}), \quad \bm{x}\in S_h\nonumber\\
&=&0, \quad \bm{x}\notin S_h\nonumber
\end{eqnarray}
solves the weak form of the generalized eigenvalue problem~\eqref{p1}.
\end{proof}
\begin{lem}\label{lem24}
Assume that $K$ is continuous at least at a single point in $\Omega$.
Let 
\begin{equation}\nonumber
\sup_{\bm{x}\in\Omega}\kappa_j(\bm{x})<\inf_{\bm{x}\in\Omega}\kappa_m(\bm{x}),
\end{equation}
for some pair $j,m\in\{1,2,3\}$, $j\ne m$. Then 
\begin{equation}\nonumber
[\sup_{\bm{x}\in\Omega}\kappa_j(\bm{x}),\inf_{\bm{x}\in\Omega}\kappa_m(\bm{x})]\subset
{\rm sp } (\mathcal{L}^{-1}\mathcal{A}).
\end{equation}
\end{lem}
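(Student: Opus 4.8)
The plan is to derive Lemma~\ref{lem24} from Lemma~\ref{lem23} by a coefficient–freezing (perturbation) argument, using continuity of $K$ at the single point only to manufacture a nearby operator that \emph{does} meet the hypotheses of Lemma~\ref{lem23}. Without loss of generality I take $j=1$, $m=2$, write $k_i=\kappa_i(\bm{x}^0)$ at the point $\bm{x}^0\in\Omega$ of continuity, and note that the separation assumption forces $k_1\le\sup_\Omega\kappa_1<\inf_\Omega\kappa_2\le k_2$, so in particular $k_1<k_2$. Throughout, $T=\mathcal{L}^{-1}\mathcal{A}$ is regarded as a bounded self-adjoint operator on $H_0^1(\Omega)$ with the inner product $\langle u,v\rangle_{\mathcal L}=\int_\Omega\nabla u\cdot\nabla v$, since $\langle Tu,v\rangle_{\mathcal L}=\int_\Omega\nabla v\cdot K\nabla u$ is symmetric and bounded ($K$ being symmetric and essentially bounded).

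Given $\varepsilon>0$, continuity of $K$ at $\bm{x}^0$ supplies an open box $S\ni\bm{x}^0$ with $\overline S\subset\Omega$ on which $|\kappa_i(\bm{x})-k_i|<\varepsilon$ for $i=1,2,3$. I would then define the modified (still diagonal, measurable, essentially bounded) coefficient $\tilde K$ by $\tilde K=\mathrm{diag}(k_1,k_2,k_3)$ on $S$ and $\tilde K=K$ on $\Omega\setminus S$, and set $\tilde{\mathcal A}$, $\tilde T=\mathcal{L}^{-1}\tilde{\mathcal A}$ (note $\mathcal L$ is unchanged). By construction each $\tilde\kappa_i$ is constant on the open set $S$. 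Moreover $\sup_\Omega\tilde\kappa_1\le\sup_\Omega\kappa_1$ and $\inf_\Omega\tilde\kappa_2\ge\inf_\Omega\kappa_2$, so the separation $\sup_\Omega\tilde\kappa_1<\inf_\Omega\tilde\kappa_2$ persists and in fact $[\sup_\Omega\kappa_1,\inf_\Omega\kappa_2]\subseteq[\sup_\Omega\tilde\kappa_1,\inf_\Omega\tilde\kappa_2]$. Lemma~\ref{lem23} therefore applies to $\tilde K$ and yields $[\sup_\Omega\kappa_1,\inf_\Omega\kappa_2]\subseteq\mathrm{sp}(\tilde T)$.

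It remains to transport this inclusion back to $T$. Since $\tilde K-K$ vanishes off $S$ and each diagonal entry is bounded by $\varepsilon$ on $S$, for all $u,v$ one has $|\langle(\tilde T-T)u,v\rangle_{\mathcal L}|=|\int_\Omega\nabla v\cdot(\tilde K-K)\nabla u|\le\varepsilon\,\Vert u\Vert_{\mathcal L}\Vert v\Vert_{\mathcal L}$, whence $\Vert\tilde T-T\Vert_{\mathcal L\to\mathcal L}\le\varepsilon$. As $T$ and $\tilde T$ are self-adjoint, the spectral–stability estimate gives $\mathrm{sp}(\tilde T)\subseteq\{z:\mathrm{dist}(z,\mathrm{sp}(T))\le\varepsilon\}$, so every $\lambda\in[\sup_\Omega\kappa_1,\inf_\Omega\kappa_2]$ lies within $\varepsilon$ of $\mathrm{sp}(T)$. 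Letting $\varepsilon\to0$ (shrinking $S$ accordingly, with the target interval and $T$ fixed) and using that $\mathrm{sp}(T)$ is closed finishes the proof. I expect the genuinely load-bearing steps to be the two verifications in the middle paragraph—that freezing $K$ on $S$ cannot destroy the separation (handled via the $\sup/\inf$ monotonicity) and that the frozen coefficients are honestly constant on an \emph{open} set so that Lemma~\ref{lem23} applies—together with the self-adjoint perturbation bound $\mathrm{dist}(\mathrm{sp}(\tilde T),\mathrm{sp}(T))\le\Vert\tilde T-T\Vert$, which is where the whole reduction hinges; the analysis itself is otherwise routine.
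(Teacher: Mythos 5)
Your proof is correct, and it is essentially the argument the paper intends: the paper's proof of Lemma~\ref{lem24} simply defers to \cite[Lemma~2.3]{GNS2020}, which is exactly this coefficient-freezing reduction to the constant-coefficient case (Lemma~\ref{lem23} here) combined with the self-adjoint perturbation bound $\mathrm{dist}(\mathrm{sp}(\tilde T),\mathrm{sp}(T))\le\Vert\tilde T-T\Vert_{\mathcal L}\le\varepsilon$ and closedness of the spectrum. Your write-up in fact supplies the details (monotonicity of the $\sup/\inf$ under freezing, the $\varepsilon$-bound on the bilinear form) that the paper leaves to the reference.
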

\begin{proof}
The proof is analogous to the proof of~\cite[Lemm~2.3]{GNS2020}.
\end{proof}

\begin{lem}\label{lem25}
Assume that $K$ is continuous in the closure $\overline\Omega$.
Then 
\begin{equation}\nonumber
{\rm sp}(\mathcal{L}^{-1}\mathcal{A})\subset {\rm Conv}(\cup_{i=1}^3\kappa_i(\overline{\Omega})).
\end{equation}
\end{lem}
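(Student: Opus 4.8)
The plan is to exploit the fact that $\mathcal{L}^{-1}\mathcal{A}$ is bounded and self-adjoint on the Hilbert space $H_0^1(\Omega)$ equipped with the inner product $\langle u,v\rangle_{\mathcal L}=\langle\mathcal{L}u,v\rangle=\int_\Omega\nabla v\cdot\nabla u$. Indeed, by definition $\langle\mathcal{L}^{-1}\mathcal{A}u,v\rangle_{\mathcal L}=\langle\mathcal{A}u,v\rangle=\int_\Omega\nabla v\cdot K\nabla u$, and since $K$ is symmetric this expression is symmetric in $u$ and $v$, while the essential boundedness of $K$ yields a bounded operator. For a bounded self-adjoint operator the spectrum is real and contained in the closure of the numerical range, so I would first record
\[
{\rm sp}(\mathcal{L}^{-1}\mathcal{A})\subset\Big[\inf_{u\ne 0}R(u),\ \sup_{u\ne 0}R(u)\Big],\qquad
R(u)=\frac{\int_\Omega\nabla u\cdot K\nabla u}{\int_\Omega\vert\nabla u\vert^2}.
\]
The quick way to see this is that for $\lambda$ outside the interval one has $\Vert(\mathcal{L}^{-1}\mathcal{A}-\lambda\mathcal{I})u\Vert_{\mathcal L}\ge{\rm dist}(\lambda,[\inf R,\sup R])\,\Vert u\Vert_{\mathcal L}$, so $\lambda\mathcal{I}-\mathcal{L}^{-1}\mathcal{A}$ is bounded below and, being self-adjoint, boundedly invertible.

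Next I would use that $K$ is diagonal to rewrite the Rayleigh quotient as a convex combination. Writing $w_i=\int_\Omega(\partial u/\partial x_i)^2\ge 0$, we get
\[
R(u)=\frac{\sum_{i=1}^3\int_\Omega\kappa_i\,(\partial u/\partial x_i)^2}{\sum_{i=1}^3 w_i}
=\sum_{i=1}^3\frac{w_i}{\sum_{j=1}^3 w_j}\,\mu_i,
\]
where for each $i$ with $w_i>0$ the number $\mu_i=w_i^{-1}\int_\Omega\kappa_i\,(\partial u/\partial x_i)^2$ is a weighted average of the values of $\kappa_i$ and hence lies in $[\min_{\overline\Omega}\kappa_i,\max_{\overline\Omega}\kappa_i]$. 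Since $\kappa_i$ is continuous on the compact connected set $\overline\Omega$, this interval is exactly $\kappa_i(\overline\Omega)$. The coefficients $w_i/\sum_j w_j$ are nonnegative and sum to one, so $R(u)$ is a convex combination of points drawn from $\cup_{i=1}^3\kappa_i(\overline\Omega)$, whence $R(u)\in{\rm Conv}(\cup_{i=1}^3\kappa_i(\overline\Omega))$ for every $u\ne 0$.

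Finally I would close the argument: the set ${\rm Conv}(\cup_{i=1}^3\kappa_i(\overline\Omega))$ is a compact interval, so it contains both $\inf_{u}R(u)$ and $\sup_{u}R(u)$, and therefore the whole interval between them; combined with the first display this yields ${\rm sp}(\mathcal{L}^{-1}\mathcal{A})\subset{\rm Conv}(\cup_{i=1}^3\kappa_i(\overline\Omega))$, as required. Note that this half of the characterization is essentially dimension-free, so the same computation proves the statement for any $d$.

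Rather than a single hard obstacle, the argument has a few points that require care. One must legitimately regard $\mathcal{L}^{-1}\mathcal{A}$ as an operator on $H_0^1(\Omega)$ through the Riesz/Lax--Milgram identification and verify boundedness and self-adjointness in $\langle\cdot,\cdot\rangle_{\mathcal L}$ before invoking the spectral inclusion into the numerical range; the essential boundedness of $K$ assumed earlier is precisely what is needed here. Indices $i$ with $w_i=0$ must be dropped from the convex combination, which is harmless since the corresponding terms vanish. The continuity of $K$ on all of $\overline\Omega$ is used to make each $\kappa_i(\overline\Omega)$ a genuine closed bounded interval and to guarantee its extremal values are attained; should $\Omega$ fail to be connected, the same estimate applies on each component and the convex hull of the union still absorbs every $R(u)$.
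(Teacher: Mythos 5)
Your proposal is correct and takes essentially the same route as the paper: both bound ${\rm sp}(\mathcal{L}^{-1}\mathcal{A})$ by the closure of the numerical range of the self-adjoint operator in the $\mathcal{L}$-inner product and estimate the Rayleigh quotient $\int_\Omega\nabla\phi\cdot K\nabla\phi/\int_\Omega\vert\nabla\phi\vert^2$ between $\min_i\inf_{\overline\Omega}\kappa_i$ and $\max_i\sup_{\overline\Omega}\kappa_i$. Your intermediate convex-combination decomposition is a harmless refinement, since ${\rm Conv}(\cup_{i=1}^3\kappa_i(\overline\Omega))$ already equals the full interval between those two extremes when $K$ is continuous on the compact set $\overline\Omega$.
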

\begin{proof}
The statement trivially follows from
\begin{equation}\nonumber
\min_{i=1,2,3}\inf_{\bm{x}\in\overline{\Omega}}\kappa_i(\bm{x})\int_\Omega\nabla\phi\cdot \nabla \phi\;\le \;
\int_\Omega\nabla\phi\cdot K\nabla \phi\;\le \;
\max_{i=1,2,3}\sup_{\bm{x}\in\overline{\Omega}}\kappa_i(\bm{x})
\int_\Omega\nabla\phi\cdot \nabla \phi
\end{equation}
for $\phi\in H_0^1(\Omega)$.
Full details can be found in the proof of~\cite[Lemm~2.4]{GNS2020}.
\end{proof}

\section{Discussion}

We introduce the proof of the main
results of~\cite{GNS2020} for 3D problems.
The main contribution is Lemma~\ref{lem22},
where a certain construction of a set of functions $v_r$ is presented.
The construction can be naturally generalized to higher dimensions. 
The methodology can also help
to derive estimates of eigenvalues of discretized operators,
and thus provide a link between preconditioned differential operators
and associated numerical linear algebra problems.
Especially, the functions $v_r$ can serve as approximations of eigenfunctions of the discretized preconditioned operators.

Some questions remain open: What is the distribution of 
eigenvaues of the associated preconditioned discretized 
operator?~or What can we say about the spectral estimates of 
preconditioned operators if
$K$ is piecewise constant?~and How these spectra depend 
on a discretization basis?

\vskip24pt

{\bf Acknowledgment.} The author thanks Tom\' a\v s Gergelits and
Zden\v ek Strako\v s for fruitful discussions and 
the Center of Advanced Applied Sciences, the European Regional Development Fund (project No.~CZ.02.1.01/0.0/0.0/16\_019/0000778) for a partial support.

%\vskip20pt

%\begin{figure}[H]
%	\centering
%	\includegraphics[width=0.75\linewidth]{f2.eps}
%	\caption{Guaranteed spectral bounds.}
%	\label{fig:bounds1}
%\end{figure}

\end{document}